\newcommand{\C}[1]{\ensuremath{\mathcal{#1}}}
\newcommand{\ch}{\operatorname{char}}
\newcommand{\secref}[1]{Section~\ref{#1}}
\newtheorem{thm}{Theorem}[section] 
\theoremstyle{remark}
\numberwithin{equation}{section}  
\begin{document}

\title[Oriented Involutions on Group Rings]%
     {Oriented Involutions, Symmetric and Skew-Symmetric Elements in Group Rings}
\author{Edgar G. Goodaire \and C\'esar Polcino Milies}
\address{Memorial University\\
St. John's, Newfoundland\\
Canada A1C 5S7}
\email{edgar@mun.ca}
\thanks{The first author wishes to thank FAPESP
of Brasil for its support and the Instituto de Matem\'atica e Estat\'{\i}stica of
the Universidade de S\~ao Paulo for its usual wonderful hospitality
during the time that this research was conducted.}
\address{Instituto de Matem\'atica e Estat\'{\i}stica \\
Universidade de S\~ao Paulo, Caixa Postal 66.281 \\
CEP 05314-970, S\~ao Paulo SP \\
Brasil}
\email{polcino@ime.usp.br}
\thanks{This research was supported by a Discovery Grant
from the Natural Sciences and Engineering Research
Council of Canada, by FAPESP,
Proc. 09/52665-0 and 11/50046-1, and by CNPq., Proc. 300243/79-0(RN)
of Brasil.
\\ \today}
\subjclass[2000]{Primary 16S34; Secondary 16W10}

\begin{abstract}
Let $G$ be a group with involution $*$ and $\sigma\colon G\to\{\pm1\}$
a group homomorphism.  The map $\sharp$ that sends $\alpha=\sum\alpha_gg$
in a group ring $RG$ to $\alpha^{\sharp}=\sum\sigma(g)\alpha_gg^*$ is
an involution of $RG$ called an \emph{oriented group involution}.
An element $\alpha\in RG$ is \emph{symmetric} if $\alpha^{\sharp}=\alpha$
and \emph{skew-symmetric} if $\alpha^{\sharp}=-\alpha$.  The sets of symmetric
and skew-symmetric elements
have received a lot of attention in the special cases that $*$ is the inverse
map on $G$ and/or $\sigma$ is identically $1$, but not in general.  In this
paper, we determine the conditions under which the sets of elements
that are symmetric and skew-symmetric, respectively, relative to a general oriented
involution form subrings of $RG$.
The work on symmetric elements is a modification
and correction of previous work.
\end{abstract}

\maketitle

\section{Introduction}
Let $g\mapsto g^*$ denote an involution on a group $G$, let $\sigma\colon G\to\{\pm1\}$
be a group homomorphism and let $R$ be a commutative ring with $1$.
For an element $\alpha=\sum_{g\in G}\alpha_gg$ in the group
ring $RG$, define
\begin{equation*}
\alpha^{\sharp}=\sum_{g\in G}\sigma(g)\alpha_g g^*.
\end{equation*}
The map $\alpha\mapsto\alpha^{\sharp}$ is an involution of $RG$ called
an \emph{oriented group involution}, the function $\sigma$ being the \emph{orientation}.  In the
case that the involution on $G$ is the classical involution, $g\mapsto g^{-1}$, the map
$\sharp$ is precisely the oriented involution introduced by S.~P. Novikov in the context of
$K$-theory  \cite{Novikov:70}.

Oriented group involutions have been studied for a long time, perhaps starting with
the fundamental paper of Giambruno and Sehgal \cite{Giambruno:93}, but primarily in the special
cases that $g^*=g^{-1}$ or $\sigma(g)=1$ for all $g\in G$.  Much of the work in this area
is described in the comprehensive treatise by Gregory Lee \cite{Lee:10}.

Throughout this paper, we let
\begin{align*}
(RG)^+ &= \{\alpha\in RG\mid \alpha^{\sharp}=\alpha\} && \text{symmetric elements}\\
\intertext{and}
(RG)^- &= \{\alpha\in RG\mid \alpha^{\sharp}=-\alpha\} && \text{skew-symmetric elements.}
\end{align*}
As indicated, the elements of $(RG)^+$ are called \emph{symmetric}
and the elements of $(RG)^-$ are \emph{skew-symmetric}.
Historically, the questions of interest have revolved around the commutativity, anticommutativity
and Lie properties of the symmetric and skew-symmetric elements.
Some properties of \emph{unitary units} ($\alpha^\sharp=\pm\alpha^{-1}$)
have also been considered.  One might ask when the
sets $(RG)^+$ or $(RG)^-$ are subrings of $RG$.  It is not hard to show that these questions
are equivalent, respectively, to asking when the elements that are symmetric relative
to $\sharp$ commute and when the elements that are skew-symmetric relative to $\sharp$ anticommute.
Observing that $(RG)^+$ is a Jordan algebra under the operation $\alpha\circ\beta=
\alpha\beta+\beta\alpha$ and that $(RG)^-$ is a Lie algebra under the Lie bracket
$[\alpha,\beta]=\alpha\beta-\beta\alpha]$, it is natural to ask when these operations
are trivial.  These questions are equivalent, respectively, to asking when the elements
of $(RG)^+$ anticommute and when the elements of $(RG)^-$ commute.  All these questions
have been studied in special cases; the first question (when is $(RG)^+$ a subring)
was studied in general in \cite{Cristo:06d}, although this paper contained a slight error
which we correct here.
We also answer completely the question as to when $(RG)^-$ is a subring, thereby
settling the first two
of the four questions posed above.  \footnote{The last two questions are the subject
of partially completed research and will be the
focus of future papers.}

In this work, we make no assumptions
about $*$ and assume that the orientation $\sigma$ is not identically $1$.  This is equivalent
to the assumption that $\ker\sigma$ has index $2$ in the group $G$ and it implies
that the characteristic of the ring
is not $2$.  We assume throughout that $\sigma$ and $*$ are \emph{compatible}
in the sense that $\sigma(g^*)=\sigma(g)$ for all $g\in G$ and remark that this is the same as
requiring that both $\ker\sigma$ and its complement in $G$ be invariant under $*$.

It is not hard to find examples of oriented involutions where the orientation
and involution are not compatible.  For example, in
\begin{equation*}
D_n=\langle a,b\mid a^n=b^2=1,ba=a^{-1} b\rangle,
\end{equation*}
the dihedral group of order $2n$, the involution defined by $a^*=a^{-1}$, $b^*=ab$
and the orientation $\sigma$ defined by $\sigma(a)=-1$, $\sigma(b)=1$ are not compatible
(in characteristic different from $2$)
because $\sigma(ab)=-1$, whereas $\sigma\bigl((ab)^*\bigr)=\sigma(b^*a^*)=\sigma(aba^{-1})
=\sigma(a^2b)=1$.

On the other hand, compatibility does
not appear to be a strong assumption.  It is satisfied with any orientation when $*$
is the classical involution $g\mapsto g^{-1}$ and also
when $*$ is transpose and $\sigma$ is determinant on a group of matrices of
determinant $\pm1$.  It is satisfied  with $A^*=A^T$ and $\sigma(A)=\frac{\det A}{|\det A|}$
on any group of matrices.
Furthermore, computer explorations
with various groups suggest an abundance of oriented involutions where the orientation
is compatible with the involution, sixteen alone in the dihedral group $D_4$ of order~$8$
and twenty-four in the quaternion group of order~$8$.

A group $G$ is said to be \emph{SLC} if it has a unique nonidentity commutator, which we shall
always denote $s$,
and the \emph{limited commutativity} property, which says that the only way two
elements $g,h\in G$ can commute is with one of the three elements $g$, $h$, $gh$ central.
On such a group, the map $*$ defined by
\begin{equation}\label{eq0}
g^* = \begin{cases}
         g & \text{if $g$ is central} \\
         sg & \text{otherwise} \end{cases}
\end{equation}
is an involution (we refer to this as the \emph{canonical} involution on an SLC group)
with which any orientation is surely compatible.  We will often use (and sometimes implicitly)
the easily checked facts that such $s$ must be central, $s^*=s$ and $s^2=1$.
We refer the reader to \cite{EGG:96},  and especially to \S III.3.

\section{Skew-Symmetric Elements Anticommute}\label{sec3}

Let $(RG)^-=\{\alpha\in RG\mid \alpha^{\sharp}=-\alpha\}$ denote the set of elements of $RG$
that are skew-symmetric relative to an oriented group involution $\sharp$.  In this section,
we determine when this set forms a subring, equivalently, when the elements of $(RG)^-$ anticommute.

This question has been answered in the special case that the orientation is trivial
with this result \cite{EGG:09c} .  In characteristic different from $2$, either
\begin{itemize}
\item  $G$ is abelian and $*$ is the identity, or
\item $\ch R=4$, $G$ is abelian and there exists $s\in G$ with $s^2=1$
and $g^*=g$ or $sg$ for all $g\in G$, or
\item $\ch R=4$, $G$ is nonabelian with a unique nonidentity commutator, $s$ (necessarily
central of order $2$), and $g^*=g$ or $sg$ for all $g\in G$.
\end{itemize}
\medskip

Our theorem is this.

\begin{thm} Let $g\mapsto g^*$ denote an involution on a group $G$
and let $\sigma\colon G\mapsto\{\pm1\}$ be an orientation homomorphism which is not identically $1$.
Let $R$ be a commutative ring with $1$ and of characteristic different from $2$.
For $\alpha=\sum_{g\in G}\alpha_gg$
in the group ring $RG$, define $\alpha^{\sharp}=\sum_{g\in G}\alpha_g\sigma(g)g^*$.  Assume
$*$ and $\sigma$ are compatible in the sense that $\sigma(g)=\sigma(g^*)$ for all $g\in G$.
If the set $(RG)^-$ of
elements of $RG$ which are skew symmetric relative to $\sharp$ anticommute, then
\begin{enumerate}
\item $\ch R=4$, $G$ is abelian, $*$ is the identity on $N=\ker\sigma$ and $x^*\ne x$
for all $x\notin N$,
\par or
\item $\ch R=4$,  $G$ is an SLC group with  $*$ the canonical involution and $x^*\ne x$
for all $x\notin N$.
\end{enumerate}
Conversely, if $G$ is a group with an index $2$ subgroup $N$ and $\sigma\colon G\to\{\pm1\}$
is the homomorphism with kernel $N$,
then $(RG)^-$ is an anticommutative set given either of these situations.
\end{thm}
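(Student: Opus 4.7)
The plan is to establish the forward direction by first extracting numerical and local constraints from $2\alpha^2 = 0$ applied to individual skew elements, and then using mixed anticommutators $\alpha\beta + \beta\alpha = 0$ to force $G$ into one of the two listed shapes. For the converse, I would describe an explicit $R$-module basis of $(RG)^-$ in each case and verify anticommutativity by direct computation, using the identities $(1+s)(1-s) = 0$ and $(1 \pm s)^2 = 2(1 \pm s)$ in characteristic $4$.

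First I would handle the baseline constraints of the forward direction. For $x \notin N$ with $x^* = x$ one has $x^\sharp = -x$, so $x$ is itself skew; anticommutativity then gives $2x^2 = 0$, forcing $\ch R = 2$ and contradicting the hypothesis, so $x^* \ne x$ for every $x \notin N$. Next, for each $h \notin N$ the element $\alpha = h + h^*$ is skew, and $2\alpha^2 = 2(h^2 + hh^* + h^*h + (h^*)^2) = 0$; a case analysis on the supports (using $h \ne h^*$ to eliminate most possible collapses) shows that $h$ and $h^*$ must commute, that $h^2 = (h^*)^2$, and that $\ch R = 4$. Applying the same argument to $\alpha = g - g^*$ for $g \in N$ with $g \ne g^*$ yields the analogous statements inside $N$. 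In particular, whenever $*$ moves $x$, the element $s_x := x^* x^{-1}$ is a central involution of $\langle x, x^* \rangle$.

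The next task is to globalize these local involutions. In the abelian case, the mixed anticommutator of $g - g^*$ (with $g \in N$, $g \ne g^*$) and $k + k^*$ (with $k \notin N$) forces $s_g = s_k$; the assignment $x \mapsto s_x$ (with $s_x = 1$ when $x^* = x$) then extends to a homomorphism $G \to \langle s\rangle \cong \mathbb{Z}/2$ whose kernel is $*$-fixed, has index $1$ or $2$ in $G$, and is contained in $N$, so it must coincide with $N$. This gives conclusion (1). The nonabelian case is the main obstacle. Imitating the strategy used for trivial orientation in the cited precursor, one picks $a, b \in G$ with $[a, b] \ne 1$, builds carefully chosen skew elements from $\{a, a^*, b, b^*\}$ according to the $N$-cosets of $a$ and $b$, and matches coefficients in $\alpha\beta + \beta\alpha = 0$. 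The up-to-eight terms of such an anticommutator can coincide in $RG$ in many ways; ruling out every configuration that would either force $\ch R = 2$ or leave a nonzero coefficient surviving in characteristic $4$ yields in sequence that every nontrivial commutator has order $2$, that all nontrivial commutators equal a single central $s$, that the limited commutativity property holds, and that $*$ is the canonical involution on this SLC group. The essential difficulty, and the reason this is not a straight transcription of the trivial-orientation proof, is the case split on $\sigma(a)$ and $\sigma(b)$, which multiplies the bookkeeping.

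For the converse, in case (1) the hypothesis that $*$ is the identity on $N$ combined with $*$ being a group involution forces $x^* = sx$ for every $x \notin N$, where $s \in N$ is a fixed involution, and hence $(RG)^- = (1 + s) \cdot R(G \setminus N)$; the anticommutator of two basis elements is $2(1+s)^2 xy = (4 + 4s)xy = 0$ in characteristic $4$. In case (2), an $R$-basis of $(RG)^-$ is $\{(1-s)g : g \in N \text{ noncentral}\} \cup \{(1+s)h : h \notin N\}$. Products between two elements of the same type reduce, via the SLC identity $gg' + g'g = (1+s)\,g'g$ for noncommuting pairs, to multiples of $(1-s)(1+s) = 0$ or of $4 \pm 4s$, both of which vanish in characteristic $4$; mixed products already contain the factor $(1-s)(1+s) = 0$. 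Thus $\alpha\beta + \beta\alpha = 0$ for all basis pairs, and hence for all $\alpha, \beta \in (RG)^-$.
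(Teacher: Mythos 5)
The forward direction of your argument has a genuine gap: the nonabelian case, which is the bulk of the work, is only a plan. You write that one ``picks $a,b$ with $[a,b]\ne1$, builds carefully chosen skew elements \dots and matches coefficients,'' and that ``ruling out every configuration \dots yields in sequence'' the desired conclusions, while conceding that the case split on $\sigma(a)$ and $\sigma(b)$ ``multiplies the bookkeeping.'' That bookkeeping is precisely the content of the proof, and none of it is carried out: the eight-term anticommutators admit many coincidence patterns (the paper's analysis of $nx+(nx)^*$ against $x+x^*$ alone requires isolating and eliminating four subcases), and nothing in your sketch shows they all collapse to the SLC configuration rather than to some other consistent pattern. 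The paper avoids redoing this classification on all of $G$ by a reduction you do not use: compatibility makes $N=\ker\sigma$ invariant under $*$, and $\sigma\equiv1$ on $N$, so the known trivial-orientation theorem applies to $RN$ and yields three structured possibilities for $(N,*|_N)$; the remaining work then only concerns how elements of $N$ interact with elements of $G\setminus N$. Without either that reduction or the explicit case analysis, the hard half of the theorem is unproved. (Your preliminary computations --- $\C{S}_3=\emptyset$, $xx^*=x^*x$, $x^2=(x^*)^2$, $\ch R=4$ --- and your abelian case do match the paper and are essentially sound.)

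A second, smaller defect is in the converse: your description of $(RG)^-$ omits the component $\C{S}_1=\{\alpha_nn\mid n^*=n\in N,\ 2\alpha_n=0\}$. In characteristic $4$ this is nonzero (it contains $2n$ for every $*$-fixed $n\in N$), so $(RG)^-$ is strictly larger than $(1+s)\cdot R(G\setminus N)$ in case (1) and strictly larger than your claimed basis in case (2). These extra elements do anticommute with everything, since the condition $2\alpha_n=0$ kills the relevant products (as the paper checks), so the converse remains true, but your verification as written does not cover them.
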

\begin{proof}
We begin by determining the nature of the elements of $(RG)^-$.
For this, it is convenient
to let $N=\ker\sigma$ and to partition the elements of $G$ into four sets,
\begin{align*}
S_1 &= \{n\in N\mid n^*=n\}, \\
S_2 &= \{n\in N\mid n^*\ne n\}, \\
S_3 &= \{x\notin N \mid x^*=x\}, \\
S_4 &= \{x\notin N \mid x^*\ne x\}.
\end{align*}
Any $\alpha\in RG$ can be written in the form
\begin{equation}\label{eq5}
\alpha = \sum_{g\in S_1} \alpha_gg + \sum_{g\in S_2} \alpha_gg
                    + \sum_{g\in S_3} \alpha_gg + \sum_{g\in S_4} \alpha_gg
\end{equation}
and then
\begin{equation*}
\alpha^{\sharp} = \sum_{g\in S_1} \alpha_gg + \sum_{g\in S_2} \alpha_gg^*
                    - \sum_{g\in S_3} \alpha_gg - \sum_{g\in S_4} \alpha_gg^*.
\end{equation*}
Thus
\begin{equation*}
\alpha^{\sharp}=-\alpha \text{ if and only if }
       \begin{cases}
            \alpha_g=-\alpha_g & \text{if $g\in S_1$} \\
            \alpha_g=-\alpha_{g^*} & \text{if $g\in S_2$} \\
            \alpha_g=\alpha_g & \text{if $g\in S_4$.}
            \end{cases}
\end{equation*}
Thus $(RG)^-$ is spanned over $R$ by elements of the sets
\begin{align*}
\C{S}_1 &= \{\alpha_n n\mid n^*=n\in N,  2\alpha_n=0\}, \\
\C{S}_2 &= \{n-n^*\mid n\in N, n^*\ne n\}, \\
\C{S}_3 &= \{x\notin N\mid x^*=x\}, \\
\C{S}_4 &= \{x+x^*\mid x\notin N, x^*\ne x\}
\end{align*}
and, of course, $(RG)^-$ is anticommutative if and only if any two elements from the
union of the $\C{S}_i$ anticommute.  If this is the case, and if $x\in\C{S}_3$, then $xx=-xx$
implies $2x^2=0$, which cannot happen in characteristic different from $2$.
Thus $\C{S}_3=\emptyset$ and  $x^*\ne x$ for every $x\notin N$.  In particular, $*$
is not the identity.

We begin with the converse, showing that in each of the situations
described in the theorem, the elements of $(RG)^-$ anticommute.

\par\medskip

(1) Suppose $G$ is abelian, $*$ is an involution on $G$ which moves every element outside $N$
but whose restriction to $N$ is the identity and $\ch R=4$.  Then $\C{S}_2=\{0\}$ and $\C{S}_3=\emptyset$.
Elements $\alpha_nn$ and $\alpha_mm$ of $\C{S}_1$ anticommute because $(\alpha_mm)(\alpha_nn)
=\alpha_m\alpha_nmn=-\alpha_m\alpha_nmn$ (using $2\alpha_m=0$)
$=-\alpha_n\alpha_mnm=-(\alpha_nn)(\alpha_mm)$.
For a similar reason, any element of $\C{S}_1$ anticommutes with
any element of $\C{S}_4$.  To see that two elements of $\C{S}_4$ anticommute, we compute
\begin{equation}\label{eq4}
(x+x^*)(y+y^*)+(y+y^*)(x+x^*)=2xy+2xy^*+2x^*y+2x^*y^*
\end{equation}
with $x\notin N$, $y\notin N$, $x^*\ne x$ and $y^*\ne y$.
Since $xy\in N$, we have $xy=(xy)^*=y^*x^*=x^*y^*$ and, similarly, $xy^*=x^*y$, so
the right side of \eqref{eq4} is $4xy+4x^*y=0$.
\par\medskip

(2) Suppose $\ch R=4$ and $G$ is an SLC group with canonical involution $*$ that
moves every element outside $N$.  Thus $x^*=sx$ if $x\notin N$.   Again $\C{S}_3=\emptyset$,
so it suffices to show
that any two elements of $\C{S}_1\cup \C{S}_2\cup\C{S}_4$ anticommute.  First we
note that any element $\alpha_nn$ of $\C{S}_1$ anticommutes with any element $\beta$ in
the group ring because $n^*=n$ means $n$ is central, so $(\alpha_nn)\beta=\beta(\alpha_nn)=
-\beta(\alpha_nn)$ because $2\alpha_n=0$.  Take $n-n^*$ and $m-m^*$ in $\C{S}_2$.  Then
$n-n^*=n-sn=(1-s)n$ and $m-m^*=(1-s)m$.  Since $(1-s)^2=1-2s+s^2=2(1-s)$,
\begin{align*}
(n-n^*)(m-m^*) &= 2(1-s)nm \\
&= \begin{cases}
                 2(1-s)mn & \text{if $nm=mn$} \\
                 2(1-s)smn =-2(1-s)mn & \text{if $nm\ne mn$}
                 \end{cases}
\end{align*}
and, because the characteristic is $4$, in either case we get $-2(1-s)mn=-(m-m^*)(n-n^*)$.
The situation is similar if we consider two elements $x+x^*=(1+s)x$ and $y+y^*=(1+s)y$
of $\C{S}_4$ because
\begin{align*}
(x+x^*)(y+y^*)  &= 2(1+s)xy \\
&= \begin{cases}
         2(1+s)yx & \text{if $xy=yx$} \\
         2(1+s)syx=2(1+s)yx & \text{if $xy\ne yx$}
   \end{cases}
\end{align*}
and again, because $\ch R=4$, $2(1+s)yx=-2(1+s)yx=-(y+y^*)(x+x^*)$.  Finally, we observe
that any element of $\C{S}_2$ anticommutes with any element of $\C{S}_4$ because the product
of two such elements is $0$:  $(n-n^*)(x+x^*)=(1-s)(1+s)nx=0$.

\medskip

Now we attack the first statement of the theorem assuming
that the elements of $(RG)^-$ anticommute and hence $\C{S}_3=\emptyset$.

The compatibility condition implies that $N$ is invariant under $*$, so we may apply to $N$
the results of \cite{EGG:09c} mentioned at the start of this section
in the case that the orientation is identically $1$. There are three possibilities.
\bigskip

1) Suppose first that $N$ is abelian and $*$ is the identity on $N$.
Let $x\notin N$.  Since $x^*\ne x$, $x+x^*\in\C{S}_4$.  The equation $(x+x^*)(x+x^*)=-(x+x^*)(x+x^*)$
implies $2(x+x^*)^2=0$ which, using $(x^*)^2=(x^2)^*=x^2$ because $x^2\in N$, gives
\begin{equation*}
2(2x^2+xx^*+x^*x)=0.
\end{equation*}
Now $x^2\ne xx^*$ and $x^2\ne x^*x$, so $x^*x=xx^*$ and $\ch R=4$.

Let $x$ and $y$ be any two elements of $G\setminus N$.  The elements
$x+x^*$ and $y+y^*$ are in $\C{S}_4$, so they anticommute.  The equation
$(x+x^*)(y+y^*)=-(y+y^*)(x+x^*)$ gives
\begin{equation}\label{eq3}
xy+xy^*+x^*y+x^*y^*+yx+yx^*+y^*x+y^*x^*=0.
\end{equation}
Now $xy\in N$, so $y^*x^*=(xy)^*=xy$.  Similarly, $xy^*=yx^*$, $x^*y=y^*x$, $x^*y^*=yx$
and \eqref{eq3} becomes $2(xy+xy^*+x^*y+yx)=0$. Since $xy\ne xy^*$ and $xy\ne x^*y$,
the only possibility is $xy=yx$; thus the elements of $G\setminus N$ commute.

Let $n\in N$
and $x\notin N$.  Then $nx\notin N$ so $(nx)x=x(nx)$, implying $nx=xn$.  Since $N$
is abelian, it follows that $G$ is abelian and we have the first situation described in the theorem.

\medskip
2) Assume $\ch R=4$, $N$ is abelian and there exists $s\in N$ with $s^2=1$
and $n^*=n$ or $sn$ for all $n\in N$.
We may assume that $n^*\ne n$ for some $n\in N$,
for otherwise we are in Case~1 which has  just been settled.
In particular, we may assume that $s\ne 1$ and then, since $s^*=s$ or $s^*=ss=1$, which is false,
we get $s^*=s$.

Let $x\notin N$.  Then $x^*\ne x$ and $x+x^*\in\C{S}_4$ anticommutes with itself giving
$x^2=(x^*)^2=(x^2)^*$, $xx^*=x^*x$, as before.
Furthermore,  $sx\notin N$, so $x+x^*$ and $(sx)+(sx)^*=sx+x^*s$ anticommute, that is,
\begin{align*}
0 &= (sx+x^*s)(x+x^*)+(x+x^*)(sx+x^*s) \\
&=sx^2+sxx^*+x^*sx+x^*sx^*+xsx+xx^*s+x^*sx+(x^*)^2s \\
&= 2sx^2+2sxx^*+2x^*sx+x^*sx^*+xsx,
\end{align*}
remembering that $N$ is abelian and noting that $xx^*$ and $x^2$ are in $N$.
Now $sx^2\ne sxx^*$ because $x\ne x^*$ and $sx^2\ne x^*sx$ because $\C{S}_3=\emptyset$ means
$sx\ne (sx)^*=x^*s$.  In characteristic $4$, it follows that $sx^2=x^*sx^*=xsx$, so $sx=xs$.
Thus $s$ is central in $G$.

Let $n$ be any element of $N$ with $n^*=n$.  So $2n\in\C{S}_1$.  Let $x$
be an element of $G$ not in $N$.  Then $x+x^*\in\C{S}_4$, so $2n$
and $x+x^*$ anticommute, giving
\begin{equation*}
2nx+2nx^*+2xn+2x^*n=0.
\end{equation*}
Now $nx\ne nx^*$ and $nx\ne (nx)^*=x^*n$, so $nx=xn$.  This shows that $n$ commutes with all elements
not in $N$.  Since $n$ also commutes with elements in $N$ ($N$ is abelian),
this element is central in $G$.
In particular, for any $x\notin N$, both $xx^*$ and $x^2$ are central.

Suppose $n\in N$ and $n^*=sn$.  Let $x$ be an element not in $N$.
Then $nx$ is not in $N$ (so $nx\ne (nx)^*=sx^*n$)
and $nx+(nx)^*=nx+sx^*n$ anticommutes with $x+x^*$ giving
\begin{equation*}
nx^2+nxx^*+sx^*nx+sx^*nx^*+xnx+sxx^*n+x^*nx+s(x^*)^2n=0.
\end{equation*}
In characteristic $4$, two sets of four elements on the left must consist of equal elements.
Now $nx^2\ne nxx^*$
and  $nx^2\ne sx^*nx$ (because $nx\ne (nx)^*$), so three elements of $\{sx^*nx^*,xnx,sxx^*n,x^*nx,s(x^*)^2n
=sx^2n\}$ are equal and equal to $nx^2$.  Since $xnx\ne x^*nx$ and $sxx^*n\ne sx^2n$, there
are four possibilities:
\begin{enumerate}
\item[i.] $nx^2=sx^*nx^*=xnx=sxx^*n$,
\item[ii.] $nx^2=sx^*nx^*=xnx=sx^2n$,
\item[iii.] $nx^2=sx^*nx^*=x^*nx=sxx^*n$,
\item[iv.] $nx^2=sx^*nx^*=x^*nx=sx^2n$.
\end{enumerate}
We have previously observed that $x$ and $x^*$ commute, so $xx^*n=x^*xn$.
Thus Case~i implies both $nx=xn$ and $nx^*=xn$, so $x^*=x$, which is wrong.  Case~ii says
$nx^2=sx^2n=snx^2$ (squares are central), an obvious contradiction, and Case~iv gives
the same contradiction, so we must be in Case~iii.
Here $x^2n=nx^2=sxx^*n$ gives $x=sx^*$, hence $x^*=sx$, and then $nx^2=x^*nx=sxnx$
gives $nx=sxn$.  As a biproduct of these arguments, we have learned also
that $n^*\ne n$ implies $n$ is not central, so
\begin{equation*}
\C{Z}(G)=\{n\in N\mid n^*=n\}
\end{equation*}
and, furthermore, the commutator of an element of $N$ with an element not in $N$ is $1$ or $s$.

Fix an $a\notin N$ and, with $n,m\in N$, use $G=N\cup Na$ to find the commutator of two elements $x=na$,
$y=ma$, neither of which is in $N$.  On the one hand
\begin{equation*}
xy=(na)(ma)=\begin{cases}
                nma^2 & \text{if $am=ma$} \\
                snma^2 & \text{if $am=sma$}
                \end{cases}
\end{equation*}
while, similarly, $yx=mna^2$ or $smna^2$.  Since $(m,n)=1$ or $s$, so also $(x,y)$ is $1$ or $s$.

Our reasonings have shown that $G'=\{1,s\}$ and that
\begin{equation*}
g^*=\begin{cases}
      g & \text{if $g\in\C{Z}(G)$} \\
      sg & \text{if $g\notin\C{Z}(G)$}.
            \end{cases}
\end{equation*}
Since $*$ is an involution, $G$ must have the LC property, for this reason:
if $gh=hg$ with none of $g,h,gh$ central, then $sgh=(gh)^*=h^*g^*=(sh)(sg)=hg=gh$,
which cannot be.  Thus we are in the second situation described by the theorem.

\medskip
3) Finally, assume that $\ch R=4$ and that $N$ is a nonabelian group with a unique nonidentity commutator,
$s$ (necessarily central in $N$ and of order $2$), and $n^*=n$ or $sn$ for every $n\in N$.
As an involution on a nonabelian group, $*$ cannot be the identity on $N$,
so $s\ne1$ and, as in the previous case, we have $s^*=s$.  Moreover, as in previous cases,
we have $x^2=(x^2)^*$ and $xx^*=x^*x$ for any $x\notin N$.

Let $x\notin N$.  Then also $sx\notin N$ so $x+x^*$ and $(sx)+(sx)^*=sx+x^*s$ anticommute, that is,
\begin{align*}
0 &= (sx+x^*s)(x+x^*)+(x+x^*)(sx+x^*s) \\
&=sx^2+sxx^*+x^*sx+x^*sx^*+xsx+xx^*s+x^*sx+(x^*)^2s \\
&= 2sx^2+2sxx^*+2x^*sx+x^*sx^*+xsx
\end{align*}
using the facts that $xx^*$ and $(x^*)^2=x^2$ are in $N$ and hence commute with $s$,
which is central in $N$.
Now $sx^2\ne sxx^*$ because $x\ne x^*$ and $sx^2\ne x^*sx$ because $\C{S}_3=\emptyset$ means
$sx\ne (sx)^*=x^*s$.  In characteristic $4$, it follows that $sx^2=x^*sx^*=xsx$, so $sx=xs$.
Thus, not only is $s$ central in $N$, but it is actually central in $G$.

Let $n$ be any element of $N$ satisfying $n^*=n$.  Then $2n\in\C{S}_1$.  Let $x$
be an element of $G$ not in $N$.  Then $x+x^*\in\C{S}_4$, so $2n$
and $x+x^*$ anticommute, that is,
\begin{equation*}
0=2nx+2nx^*+2xn+2x^*n.
\end{equation*}
Now $nx\ne nx^*$ and $nx\ne (nx)^*=x^*n$ (remember that $\C{S}_3=\emptyset$), so $nx=xn$.
This shows that
$n$ commutes with all elements of $G\setminus N$.
Take any $m$ in $N$ and $x\notin N$.  Then $n$ and $mx$ commute, so $nmx=mxn=mnx$ so $mn=nm$.
This proves that if $n^*=n\in N$, then $n$ is central in $G$.  In particular, for any $x\notin N$,
we have both $xx^*$ and $x^2$ central in $G$.

Since $*$ is not the identity on $N$, there exists $n\in N$
with $n^*=sn$.  Let $x$ be an element of $G$ not in $N$.  Then $nx$ is not in $N$ (so $nx\ne (nx)^*=sx^*n$)
and $nx+(nx)^*=nx+sx^*n$ anticommutes with $x+x^*$ giving
\begin{equation*}
0 = nx^2+nxx^*+sx^*nx+sx^*nx^*+xnx+sxx^*n+x^*nx+s(x^*)^2n.
\end{equation*}
Now $nx^2\ne nxx^*$
and  $nx^2\ne sx^*nx$ (because $nx\ne (nx)^*$), so three elements of $\{sx^*nx^*,xnx,sxx^*n,x^*nx,s(x^*)^2n
=sx^2n\}$ are equal and equal to $nx^2$.  Now $xnx\ne x^*nx$ and $sxx^*n\ne sx^2n$, so there
are apparently four possibilities:
\begin{enumerate}
\item[i.] $nx^2=sx^*nx^*=xnx=sxx^*n$,
\item[ii.] $nx^2=sx^*nx^*=xnx=sx^2n$,
\item[iii.] $nx^2=sx^*nx^*=x^*nx=sxx^*n$,
\item[iv.] $nx^2=sx^*nx^*=x^*nx=sx^2n$.
\end{enumerate}
Case i implies both $nx=xn$ and $nx^*=xn$ (using $sxx^*n=sx^*xn$) and hence $x=x^*$, a contradiction.
Cases ii and iv say $nx^2=sx^2n=snx^2$ by centrality of $x^2$, an obvious contradiction, so the
situation is as described in Case~iii.  So $x^2n=nx^2=sxx^*n$ and hence $x=sx^*$ and $x^*=sx$,
and then $nx^2=x^*nx=sxnx$, so $nx=sxn$.
In particular, we learn that $n^*\ne n$ implies $n$ is not central, so, just as before,
\begin{equation*}
\C{Z}(G)=\{n\in N\mid n^*=n\}.
\end{equation*}
We have also seen that any commutator $(n,x)$, with $n\in N$ and $x\notin N$, is $1$ or $s$.
The commutator of any two elements of $N$ is also $1$ or $s$.
Fix an $a\notin N$, take $n,m\in N$ and use $G=N\cup Na$ to find the commutator of two elements $x=na$,
$y=ma$ neither of which is in $N$.  On the one hand
\begin{equation*}
xy=(na)(ma)=\begin{cases}
                nma^2 & \text{if $am=ma$} \\
                snma^2 & \text{if $am=sma$}
                \end{cases}
\end{equation*}
while, similarly, $yx=mna^2$ or $smna^2$.  Since the commutator $(n,m)=1$ or $s$, it is clear
that $(x,y)$ is also $1$ or $s$.  All this shows
that $G'=\{1,s\}$ and the involution $*$ is defined by
\begin{equation*}
g^*=\begin{cases}
      g & \text{if $g\in\C{Z}(G)$} \\
      sg & \text{if $g\notin\C{Z}(G)$}.
      \end{cases}
\end{equation*}
It remains just to show that $G$ has the limited commutativity (LC) property (so that $G$ is SLC).

Suppose two elements $x\notin N$, $y\notin N$ commute.  Then $xy=yx$ implies $xyx^*=yxx^*=xx^*y$,
$yx^* = x^*y$ and, similarly, $xy^*=y^*x$.   Now $x+x^*$ and $y+y^*$ are in $\C{S}_4$,
so they anticommute, giving
\begin{align*}
0 &= (x+x^*)(y+y^*)+(y+y^*)(x+x^*) \\
&= 2xy++2x^*y^*+2xy^*+2x^*y.
\end{align*}
The only possibility is $xy=x^*y^*$ and $xy^*=x^*y$.  The former equation says
$xy=(yx)^*=(xy)^*$ so $xy$ is central.

We have seen that $nx=xn$, $n\in N$, $x\notin N$, implies that $n$ is not central.
Finally, if $n,m\in N$ commute and neither is central, then $n^*=sn$, $m^*=sm$
and so $(nm)^*=m^*n^*=(sm)(sn)=mn=nm$, that is, $nm$ is central.  Thus $G$ indeed
has the LC property, the situation is as described in (2), and the proof is complete.
\end{proof}

\section{Symmetric Elements Commute}\label{sec2}

In this section, we find conditions under which the $\sharp$-symmetric elements of a
group ring $RG$ commute and thus determine when $(RG)^+$ is a subring
of $RG$.
As mentioned, this problem has been previously studied \cite{Cristo:06d} although, unfortunately,
the possible existence of $2$-torsion elements in $R$ was overlooked.
The correct result is this.

\begin{thm} Let $g\mapsto g^*$ denote an involution on a group $G$
and let $\sigma\colon G\mapsto\{\pm1\}$ be an orientation homomorphism which is not identically $1$.
Set $N=\ker\sigma$.
Let $R$ be a commutative ring with $1$ and let $R_2=\{r\in R\mid 2r=0\}$.
For $\alpha=\sum_{g\in G}\alpha_gg$
in the group ring $RG$, define $\alpha^{\sharp}=\sum_{g\in G}\sigma(g)\alpha_gg^*$.  Assume
$*$ and $\sigma$ are compatible in the sense that $\sigma(g)=\sigma(g^*)$ for all $g\in G$.
If the set $(RG)^+$ of
$\sharp$-symmetric elements of $RG$ is commutative, then $G$ is abelian or else
one of the following situations arises:
\begin{enumerate}
\item[(1)] $R_2^2=\{0\}$, $N$ is abelian, $x^*=x$ for $x\in G\setminus N$ and
$n^*=a^{-1}na$ for any $n\in N$ and any $a\in G\setminus N$;
\item[(2)] the characteristic of $R$ is $4$, $N$ is abelian,
$G$ has a unique nonidentity commutator, $s$, $x^*=sx$ for $x\in G\setminus N$, and
$n^*=a^{-1}na$ for any $n\in N$ and any $a\in G\setminus N$;
\item[(3)] $R_2^2=\{0\}$,  $N$ is SLC with unique nonidentity
commutator $s$ and canonical involution $*$, and $G=N\langle a\rangle$ is the product of $N$ and a central
subgroup generated by an element $a$ satisfying $a^*=sa$.
\item[(4)] the characteristic of $R$ is $4$ and both $N$ and $G$ are SLC groups
with canonical involution.
\end{enumerate}
Conversely, suppose that $G$ is a group with an index $2$ subgroup $N$ and $\sigma\colon G\to\{\pm1\}$ is
the orientation homomorphism with kernel $N$.  Furthermore, suppose that either $G$ is abelian
with $*$ the identity map or
the conditions of (1), (2), (3) or (4) are satisfied.  Then $*$ is an involution on $G$ and
the set $(RG)^+$ of $\sharp$-symmetric elements is a commutative set.
\end{thm}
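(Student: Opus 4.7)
The plan is to mirror the proof in \secref{sec3}, replacing anticommutativity by commutativity throughout. Reusing the partition $G = S_1 \cup S_2 \cup S_3 \cup S_4$ introduced there and solving $\alpha^{\sharp} = \alpha$ coefficient by coefficient, I would first identify $(RG)^+$ as the $R$-span of four generator types: $\{n : n \in S_1\}$, $\{n + n^* : n \in N,\ n^* \neq n\}$, $\{r x : x \in S_3,\ 2r = 0\}$, and $\{x - x^* : x \notin N,\ x^* \neq x\}$. The crucial contrast with the skew case is that type (iii) generators are no longer automatically forbidden: they survive whenever $R$ has nonzero $2$-torsion, and tracking them is precisely what produces the condition $R_2^2 = \{0\}$ in situations (1) and (3) and what exposes the oversight in \cite{Cristo:06d}.

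For the converse I would verify, in each of the five permitted configurations, that every pair of these generators commutes. The abelian case with $*$ the identity is immediate. In (1) and (2), the hypothesis $n^* = a^{-1} n a$ makes $n + n^*$ an $\langle a\rangle$-invariant trace in $RN$ that centralises the type (iv) generators, so those cross-products collapse; the remaining products of two type (iv) generators reduce to expressions of the form $(1\pm s)(xy - yx)$ (or their analogues in case (1)), which are killed by $\ch R = 4$ in case (2) and by $R_2^2 = \{0\}$ in case (1). In (3) and (4), writing $n + n^* = (1+s)n$ and $x - x^* = (1-s)x$ for non-central $n$ and $x$, and invoking $(1+s)(1-s)=0$ together with centrality of $s$, reduces each product to a sum of two equal terms annihilated by the appropriate $2$-torsion hypothesis. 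The mixed products involving a type (iii) generator $rx$ use $R_2^2 = \{0\}$ directly, since their coefficients land in $R_2 \cdot R_2$.

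For the forward direction I would proceed in three stages. First, commuting two type (iii) generators forces $R_2^2 \cdot (xy - yx) = 0$ for all $x,y \in S_3$, which cleanly dichotomises the argument along $R_2^2 = \{0\}$ versus $R_2^2 \neq \{0\}$; the latter ultimately forces $\ch R = 4$ by an analysis parallel to that of \secref{sec3}. Second, commuting two type (iv) generators and using that $xy, xy^*, x^*y, x^*y^* \in N$ yields a relation which, combined with compatibility (so that $N$ is $*$-invariant), lets me apply the known classification of when $*$-symmetric elements of $RN$ commute under the trivial orientation to conclude that $N$ is abelian or SLC. Third, cross-commutators between type (ii) and type (iv) generators reveal the conjugation formula $n^* = a^{-1} n a$ in cases (1) and (2) and determine the interaction between $G \setminus N$ and $N$ in cases (3) and (4); examining products of two elements outside $N$ then pins down the global commutator structure and confirms the LC property needed for case (4).

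The principal obstacle I expect is the meticulous $R_2$-torsion bookkeeping. Unlike the skew-symmetric setting, type (iii) generators persist and interact with every other generator type, so at each step one must decide whether a cancellation requires only the weaker hypothesis $R_2^2 = \{0\}$ or the stronger $\ch R = 4$. Getting this dichotomy right across every sub-case is exactly where the argument of \cite{Cristo:06d} went astray and is the delicate heart of the proof.
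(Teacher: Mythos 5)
Your generating sets $\C{T}_1,\dots,\C{T}_4$ and the overall architecture (mirror the skew-symmetric case, reduce to the trivial-orientation classification on $N$, track the $2$-torsion) do match the paper's, but two of the mechanisms you propose for the forward direction would not work as stated. First, the conclusion that $N$ is abelian or SLC cannot be extracted from commuting two type (iv) generators: in the configuration of case (1) every $x\notin N$ is fixed by $*$, so $\C{T}_4=\emptyset$ and there are no such products to exploit, yet $N$ must still be shown abelian or SLC. The correct route (and the paper's) is that $\C{T}_1\cup\C{T}_2$ spans precisely the set of $*$-symmetric elements of $RN$ under the \emph{trivial} orientation, and this set sits inside $(RG)^+$; hence it is commutative and the classification of Jespers and Ruiz Mar\'{\i}n \cite{Jespers:06} applies directly to $N$, with no input from $G\setminus N$ required.

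Second, the dichotomy you propose---branch on $R_2^2=\{0\}$ versus $R_2^2\ne\{0\}$---is the wrong way around, and the two sides are not even mutually exclusive: in $\mathbb{Z}/4\mathbb{Z}$ one has both $\ch R=4$ and $R_2^2=\{0\}$, so ``$R_2^2\ne\{0\}$ ultimately forces $\ch R=4$'' is not a usable organizing principle. The paper instead branches on the shape of the involution outside $N$: in the abelian-$N$ case, on whether $x^*=x$ for all $x\notin N$ (yielding (1), where noncommuting elements of $\C{T}_3$ force $R_2^2=\{0\}$) or some $x\notin N$ is moved by $*$ (yielding (2), where commuting $(1-s)x$ with $(1-s)y$ for noncommuting $x,y$ forces $\ch R=4$); and in the SLC-$N$ case, on whether some $a\notin N$ with $a^*\ne a$ commutes with all of $N$ (Case I, giving (3)) or not (Case II, giving (4)). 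The ring conditions are conclusions read off at the end of each structural branch, not hypotheses to branch on. Beyond this, the real labour---establishing that $s$ is central in $G$, that $n^*=a^{-1}na$ or $*$ is canonical, that $G'=\{1,s\}$, and that $G$ has the LC property in case (4)---is compressed into a single sentence of your third stage, and that is where essentially all of the paper's argument lives.
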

\begin{proof}
As before, we begin by determining the nature of the elements of $(RG)^+$.  With the
$S_i$ as in \secref{sec3} and $\alpha\in RG$ written in the form \eqref{eq5}, we have
\begin{equation*}
\alpha^{\sharp}=\alpha \text{ if and only if }
       \begin{cases}
            \alpha_g=\alpha_{g*} & \text{if $g\in S_2$} \\
            \alpha_g=-\alpha_g & \text{if $g\in S_3$} \\
            \alpha_g=-\alpha_g^* & \text{if $g\in S_4$.}
            \end{cases}
\end{equation*}
Thus $(RG)^+$ is spanned over $R$ by the union of the sets
\begin{align*}
\C{T}_1 &= \{n\in N\mid n^*=n\}, \\
\C{T}_2 &= \{n+n^*\mid n\in N, n^*\ne n\}, \\
\C{T}_3 &= \{\alpha_x x\mid x^*=x\notin N, 2\alpha_x=0\}, \\
\C{T}_4 &= \{x-x^*\mid x\notin N, x^*\ne x\}.
\end{align*}
Clearly, the elements of $(RG)^+$ commute if and only if any two elements
from the union of the $\C{T}_i$ commute.

We begin our proof with the converse, noting initially
that if $G$ is abelian, then $RG$ is a commutative ring, so the elements of $(RG)^+$ commute.

\smallskip
(1) Suppose $R_2^2=\{0\}$, $N$ is abelian, $x^*=x$ for $x\notin N$ (so $\C{T}_4=\emptyset$)
and $n^*=a^{-1}na$ for
any $n\in N$ and any $a\notin N$.
We first note that $n^*$ is well-defined because if $a$ and $b$ are any two elements of $G\setminus N$,
then $b=ma$
for some $m\in N$ because $N$ has index $2$ in $G$, so $b^{-1}nb=a^{-1}m^{-1}nma=a^{-1}na$
because $N$ is abelian.  Now fix $a\notin N$.
To show that $*$ is an involution, there are three cases to consider.
\begin{itemize}
\item If $n_1,n_2\in N$, then $(n_1n_2)^*=a^{-1}n_1n_2a=(a^{-1}n_1a)(a^{-1}n_2a)
=(a^{-1}n_2a)(a^{-1}n_1a)$ (because $N$ is abelian), and this is $n_2^*n_1^*$.
\item If $n_1\in N$ and $x\notin N$, then $x=n_2a$ for some $n_2\in N$.  Since $n_1x\notin N$,
$(n_1x)^*=n_1x$, while $x^*n_1^*=xa^{-1}n_1a=n_2aa^{-1}n_1a=n_2n_1a=n_1n_2a=n_1x$ also.
\item If $x\notin N$ and $y\notin N$, then $x=an_1$, $y=an_2$ for some $n_1,n_2\in N$.
Since $xy\in N$, $(xy)^*=a^{-1}xya=a^{-1}an_1an_2a=n_1an_2a$, whereas $y^*x^*=yx=an_2an_1$.
Now $an_2\notin N$ and $a\notin N$, so their product $an_2a$ is in $N$ and so commutes
with $n_1$.  This gives $y^*x^*=n_1an_2a=(xy)^*$ in this case as well.
\end{itemize}
To show that the elements of $(RG)^+$ commute, it is sufficient to show
that any two elements of $\C{T}_1\cup\C{T}_2\cup \C{T}_3$ commute ($\C{T}_4=\emptyset)$.
Any two elements of $\C{T}_1\cup\C{T}_2$ commute because $N$ is abelian.  Elements $\alpha_x x$
and $\alpha_y y$ of $\C{T}_3$ commute because, as elements of $R_2$, $\alpha_x$ and $\alpha_y$
have product~$0$.
Let $n\in \C{T}_1$ and $\alpha_x x\in\C{T}_3$.  Since the restriction of $*$ to $N$
is conjugation by any element
not in $N$, $n=n^*=x^{-1}nx$, so $xn=nx$ implying that $n$ and $\alpha_x x$ commute.  Finally,
if $n+n^*\in\C{T}_2$ and $\alpha_x x\in\C{T}_3$, then $n^*=x^{-1}nx$, so $x(n+n^*)=xn+nx$,
whereas $(n+n^*)x=nx+x^{-1}nx^2=nx+xn$ too
(because $x^2$ is in $N$ and so commutes with $n$).

\smallskip
(2) Suppose $\ch R=4$, $N$ is abelian, $G$ has a unique nonidentity commutator, $s$ (necessarily
central and of order $2$),  $x^*=sx$ for $x\notin N$ and $n^*=a^{-1}na$ for any $n\in N$ and any $a\notin N$.
As in our discussion of (1) just above, $n^*$ is well-defined and, to show that $*$ is an involution on $G$,
there are three cases to consider.
\begin{itemize}
\item If $n_1,n_2\in N$, then $(n_1n_2)^*=n_2^*n_1^*$ as before.
\item If $n_1\in N$ and $x\notin N$, then $x=n_2a$ for some $n_2\in N$.  Since $n_1x\notin N$,
$(n_1x)^*=sn_1x$, while $x^*n_1^*=sxa^{-1}n_1a=sn_2aa^{-1}n_1a=sn_2n_1a=sn_1n_2a=sn_1x$ also.
\item If $x\notin N$ and $y\notin N$, then $x=an_1$, $y=an_2$ for some $n_1,n_2\in N$.
Since $xy\in N$, $(xy)^*=a^{-1}xya=a^{-1}an_1an_2a=n_1an_2a$, whereas $y^*x^*=(sy)(sx)=yx=an_2an_1$.
As above, the elements $(xy)^*$ and $y^*x^*$ are the same.
\end{itemize}
This time, $\C{T}_3=\emptyset$, so to show that the elements of $(RG)^+$ commute, it suffices to show
that any two elements of $\C{T}_1\cup\C{T}_2\cup\C{T}_4$ commute.
Any two elements of $\C{T}_1\cup\C{T}_2$ commute because $N$ is abelian.
Let $x-x^*=x-sx=(1-s)x$ and $y-y^*=(1-s)y$ be two elements of $\C{T}_4$.  Then
$(x-x^*)(y-y^*)=(1-s)^2xy=2(1-s)xy$, whereas
\begin{equation*}
(y-y^*)(x-x^*)=2(1-s)yx=\begin{cases}
   2(1-s)xy & \text{if $yx=xy$} \\
   2(1-s)sxy &\text{if $yx\ne xy$}.
   \end{cases}
\end{equation*}
Since $2(1-s)s=2(s-1)=-2(1-s)=2(1-s)$ (in characteristic $4$), we see that $x-x^*$ and $y-y^*$
commute in either case.
Let $n\in \C{T}_1$ and $x-x^*\in \C{T}_4$.   Since $n=n^*=x^{-1}nx$,  $n$ and $x$ commute, so
$n$ and $x-x^*=(1-s)x$ commute.  Finally, let $n+n^*\in\C{T}_2$ and
$x-x^*=(1-s)x\in\C{T}_4$.  If $nx=xn$, then $(nx)^*=(xn)^*$, that is, $x^*n^*=n^*x^*$
which says $sxn^*=sn^*x$, so $n^*x=xn^*$ too.  Then $(n+n^*)(x-x^*)=(1-s)(nx+n^*x)$, whereas
$(x-x^*)(n+n^*)=(1-s)(xn+xn^*)$ and these elements are the same.  If $nx\ne xn$,
then $nx=sxn$, $n^*=x^{-1}nx=sn$, $n+n^*=(1+s)n$ and both products
$(n+n^*)(x-x^*)$ and $(x-x^*)(n+n^*)$ are $0$ because $(1+s)(1-s)=0$.  In any case,
an element of $\C{T}_2$ always commutes
with an element of $\C{T}_4$.
\smallskip

(3) Suppose $R_2^2=\{0\}$, $N$ is SLC with unique nonidentity
commutator $s$ and canonical involution $*$,
and $G=N\langle a\rangle$ is the product of $N$ and a cyclic subgroup
$\langle a\rangle$ generated by a central element $a\notin N$ satisfying $a^*=sa$.
Since $G=N\cup Na$, if $x\notin N$, there exists $n\in N$ with $x=na$, so $x^*=a^*n^*=sn^*a$ (because
$a$ is central).  Thus $*$ extends to a map $G\to G$ which is an involution on $G$
because $*$ is an involution on $N$,
\begin{itemize}
\item $[n_1(n_2a)]^*=[(n_1n_2)a]^*=s(n_1n_2)^*a=sn_2^*n_1^*a=sn_2^*an_1^*$ ($a$ is central)
$=(n_2a)^*n_1^*$,
\par and
\item $[(n_1a)(n_2a)]^*=[(n_1n_2)a^2]^*=(a^2)^*(n_1n_2)^*=(a^*)^2(n_1n_2)^*$
\par $=(sa)^2(n_1n_2)^*=(n_1n_2)^*a^2$ because $s^2=1$,
\par while $(n_2a)^*(n_1a)^*=sn_2^*asn_1^*a=(n_1n_2)^*a^2$ too.
\end{itemize}
To show that the elements of $(RG)^+$ commute, we first remark that elements $n\in N$ with $n^*=n$
and of the form $n+n^*$, $n\in N$, are known to be central in the group ring $RN$
because $N$ is SLC \cite[Corollary III.4.3]{EGG:96}.
It follows that any two elements of $\C{T}_1\cup\C{T}_2$ commute.
Elements $\alpha_xx$ and $\alpha_yy$
in $\C{T}_3$ commute because $\alpha_x\alpha_y=0$.  If $x-x^*$ is in $\C{T}_4$, then $x=na$
for some $n\in N$, so $x-x^*=na-sn^*a=(n-sn^*)a\in (RN)a$.  Now consider two elements
$(n-sn^*)a$ and $(m-sm^*)a$ of this form.  Commutativity is equivalent to
$(n-sn^*)(m-sm^*)=(m-sm^*)(n-sn^*)$,
that is,
\begin{equation*}
nm-snm^*-sn^*m+n^*m^*=mn-smn^*-sm^*n+m^*n^*.
\end{equation*}
If $n$ or $m$ is central, say $n$, then $n^*=n$ and this equation is easily seen  to be satisfied.
On the other hand, if neither $n$ nor $m$ is central, then $n^*=sn$, $m^*=sm$ and each side of the equation
is $0$.  In any case, two elements of $\C{T}_4$ commute.
Commutativity of all other pairs of
elements in $\C{T}_1\cup\C{T}_2\cup\C{T}_3\cup\C{T}_4$ follows from centrality of~$a$.
\smallskip

(4) Suppose $\ch R=4$ and both $N$ and $G$ are SLC groups with canonical involution.
To show that the elements of $(RG)^+$ commute, it suffices to show
that any two elements of $\C{T}_1\cup\C{T}_2\cup\C{T}_3\cup\C{T}_4$ commute.
In an SLC group $G$, $*$-symmetric elements
and elements of the form $g+g^*$ are central \cite[Corollary III.4.3]{EGG:96},
so the elements of  $\C{T}_1\cup\C{T}_2\cup\C{T}_3$ are central.  It remains then only
to consider two elements of $\C{T}_4$ and these commute with precisely the argument
given above in case (2).
\bigskip

We now attack the first statement of the theorem, assuming that $G$ is nonabelian and
$\alpha\mapsto\alpha^\sharp$ is a nontrivial oriented group involution on a group ring $RG$
relative to which the symmetric elements commute.  Thus $N=\ker\sigma$ has index $2$
in $G$ and, since $\sigma$ is not identically $1$, we must have $\ch R\ne 2$.
On the other hand, $\sigma_{|N}\equiv1$ and the restriction of $*$ to $N$ is an involution
$N\to N$ (by compatibility), so $N$ is abelian or an SLC group \cite[Theorem 2.4]{Jespers:06}.

\medskip

{\bfseries Assume that $N$ is abelian.}
\smallskip

Let $a$ be any element of $G\setminus N$.  If the elements of $G\setminus N$
commute, then $a$ and $na$ commute for any $n\in N$, so $a$ and $n$ commute
and it is clear that $G$ is abelian, a contradiction.  Thus we can always assume the
existence of elements $x,y\in G\setminus N$ with $xy\ne yx$.

Suppose that $x^*=x$ for all $x\notin N$.  Let $n\in N$ and $a\notin N$. Then $G=N\cup Na$
and $na\notin N$, so $na=(na)^*=a^*n^*=an^*$ giving $n^*=a^{-1}na$.
Let $\alpha_x,\alpha_y\in R_2$ and let $x$ and $y$ be elements of $G\setminus N$ that do not commute.
Since $x^*=x$ and $y^*=y$, the elements $\alpha_xx$, $\alpha_yy$ are in $\C{T}_3$, so they commute,
implying $0=\alpha_x\alpha_y(xy-yx)$.  So $\alpha_x\alpha_y=0$ and we have the situation
described in (1).
Henceforth, then, we assume the existence of an element $x\notin N$ with $x^*\ne x$.

Let $n\in N$ satisfy $n^*=n$.  Let $x\notin N$ be such that $x^*\ne x$.
Then $x-x^*\in\C{T}_4$ and this element commutes with $n$ (because $n\in\C{T}_1$).
The equation $n(x-x^*)=(x-x^*)n$ says $nx-nx^*=xn-x^*n$, so $nx+x^*n=xn+nx^*$.  Each
side of this equation is the sum of group elements.  In characteristic different from $2$,
the only possibility is $xn=nx$.

Let $n\in N$ with $n^*\ne n$ and again take $x\notin N$ with $x^*\ne x$.
The elements $n+n^*\in\C{T}_2$ and $x-x^*\in\C{T}_4$ commute, so
 $nx-nx^*+n^*x-n^*x^*=xn+xn^*-x^*n-x^*n^*$, that is,
\begin{equation*}
nx+n^*x+x^*n+x^*n^*=xn+xn^*+nx^*+n^*x^*.
\end{equation*}
Since $x\ne x^*$ and $n\ne n^*$ (and $\ch R\ne2$), $nx$ appears on the left side of this equation
(with nonzero coefficient), so it must be on the right and there are apparently
three possibilities---$nx=xn$, $nx=xn^*$ or $nx=n^*x^*$---but, in fact, just two because we shall
argue that the third alternative
implies one of the first two.  For this, assume $nx=n^*x^*=(xn)^*$.  We will show that
this implies $nx=xn$ or $nx=xn^*$.
The element $(nx)-(nx)^*=nx-xn$ is in $\C{T}_4$, so it commutes with $n+n^*$, that is,
\begin{equation*}
nxn+nxn^*-xn^2-xnn^*=n^2x-nxn+n^*nx-n^*xn,
\end{equation*}
which, rewritten, gives
\begin{equation*}
2nxn+nxn^*+n^*xn=n^2x+n^*nx+xn^2+xnn^*.
\end{equation*}
Since $nxn$ appears on the left side of this equation with coefficient $2$,
two terms on the right side are equal,
and equal to $nxn$.  Since $n^*\ne n$, $xn^2\ne xnn^*$ and $n^2x\ne nn^*x=n^*nx$, the
choices are
\begin{enumerate}
\item[i.] $nxn=n^2x=xn^2$;
\item[ii.] $nxn=n^2x=xnn^*$;
\item[iii.] $nxn=n^*nx=xn^2$;
\item[iv.] $nxn=n^*nx=xnn^*$.
\end{enumerate}
In cases i, ii and iii, $nx=xn$, and in case iv, since $n$ and $n^*$ commute in an SLC group,
we have $nxn=xnn^*=xn^*n$ and so $nx=xn^*$.  Thus $nx=n^*x^*$ implies $nx=xn$ or $nx=xn^*$ as claimed.

\smallskip
At this point, we have shown that for any $n\in N$ (whether or not $n^*=n$)
and for any $x\notin N$ with $x^*\ne x$, either $nx=xn$ or $n^*=x^{-1}nx$.

Now fix $a\notin N$ with $a^*\ne a$.  Let
\begin{equation*}
H =\{n\in N\mid an=na\} \text{ and } K=\{n\in N\mid n^*=a^{-1}na\}.
\end{equation*}
Each of these sets is a subgroup of $N$ (because $N$ is abelian) and $N=H\cup K$, so either
$H\subseteq K=N$ or $K\subseteq H=N$.
In the second case, $G=N\cup Na$ is abelian, contrary to assumption.

Thus we have the first case, $K=N$
and $n^*=a^{-1}na$ for all $n\in N$.  Let $x\notin N$.  Then
$x=na$ for some $n\in N$, so $x^*=a^*n^*=a^*a^{-1}na=sx$ with $s=a^*a^{-1}\ne1$ independent
of $x$.  In particular, $x=(x^*)^*=sx^*=s^2x$, so $s^2=1$.
Now $s\in N$, so $s^*=a^{-1}sa=a^{-1}a^*$ and $s=(s^*)^*=a(a^{-1})^*=asa^{-1}$ (because
$a^{-1}\notin N$).  It follows that $a^{-1}s=sa^{-1}$, so $s$ and $a$ commute
and $s^*=a^{-1}sa=s$.
As well, for any $n\in N$, $s$ and $n$ commute (because $s$ is in the abelian group $N$),
so $s(na)=(na)s$.  It follows that $s$ commutes with any $x\notin N$, so $s$ is central in $G$.

Now $G\setminus N=Na$ is not commutative (else $G$ is abelian),
so there exist elements
$x-x^*,y-y^*\in\C{T}_4$ with $xy\ne yx$.  Thus $x-x^*=x-sx=(1-s)x$ and $y-y^*=(1-s)y$ commute, so
$(1-s)^2xy=(1-s)^2yx$, that is, $2(1-s)xy=2(1-s)yx$,
equivalently,
\begin{equation*}
2xy+2syx=2yx+2sxy.
\end{equation*}
Since $xy\ne yx$, the only possibility is $\ch R=4$, $xy=syx$ and $yx=sxy$.  So the
commutator $(x,y)=x^{-1}y^{-1}xy=(yx)^{-1}(xy)=s$.  Let $n_1\in N$ and $x\notin N$.  Then
$x=n_2a$ for some $n_2\in N$ and the commutator $(n_1,x)=n_1^{-1}a^{-1}n_2^{-1}n_1n_2a
=n_1^{-1}a^{-1}n_1a$ because $N$ is abelian.  Neither $a^{-1}$ nor $n_1a$ are in $N$,
so the commutator of these two elements is $1$ or $s$; that is, $a^{-1}n_1a=n_1aa^{-1}=n_1$
or $a^{-1}n_1a=sn_1aa^{-1}=sn_1$, giving $(n_1,x)=n_1^{-1}n_1=1$ or $(n_1,x)=n_1^{-1}sn_1=s$.
It follows that $G'=\{1,s\}$ and we have the situation described by case~(2) of the theorem.

\medskip
{\bfseries Assume that $N$ is an SLC group} (with unique commutator $s$).
\smallskip

We first prove that the restriction of $*$ to $N$ is the canonical involution on $N$---see \eqref{eq0}.
Let $n\in N$ with $n^*=n$ and suppose that $n$ is not central, selecting $m\in M$
with $nm\ne mn$ (so $nm=smn$).  Then $n$ and $m+m^*$ commute, giving $nm+nm^*=mn+m^*n$ and hence $nm=m^*n$.
So $m^*=nmn^{-1}=sm$.  Now $n$ and $mn$ do not commute and so, similarly, $(mn)^*=smn$.
But $(mn)^*=n^*m^*=snm$, so $nm=mn$, a contradiction.  This shows that if $n=n^*$, then $n$ must be
central.

Suppose $n\in N$ is not central and choose $m\in N$ with $nm\ne mn$.  Then $m$ is also
noncentral, so both $n^*\ne n$ and $m^*\ne m$.  The elements $n+n^*$ and $m+m^*$ are
both in $\C{T}_2$, so they commute, giving
\begin{equation}\label{eq7}
nm+nm^*+n^*m+n^*m^*=mn+mn^*+m^*n+m^*n^*.
\end{equation}
Since $nm\ne nm^*$ and $n^*m\ne n^*m^*$, the element $nm$ appears on the left side,
so it appears on the right.  Now $nm\ne mn$ and $nm\ne m^*n^*$, else $nm=(nm)^*$ would
imply that $nm$ is central and $n$ and $m$ commute.  The possibilities are, therefore,
$nm=mn^*$ (implying $n^*=m^{-1}nm=sn$) or $nm=m^*n$ (implying $m^*=nmn^{-1}=sm$).  Suppose
$n^*=sn$ and apply what we have just learned to the noncentral elements $m$ and $nm$.
Either $m^*=sm$ or $(nm)^*=snm$, the latter giving $m^*n^*=snm$, so $sm^*n=snm=mn$ and
$m^*=sm$.  Similarly, if $m^*=sm$, we get $n^*=sn$ too.  Thus $m^*=sm$ for any noncentral
element $m$.

Let $n$ be central and $m$ noncentral.  Then $nm$ is noncentral, so $(nm)^*=snm$.
This gives $m^*n^*=snm$, $smn^*=snm=smn$, so $n^*=n$.  We have shown that $n$ is central
if and only if $n^*=n$ and hence $m$ is noncentral if and only if $m^*=sm$.  Indeed,
the restriction of $*$ to $N$ is canonical.
\smallskip

Suppose next that $x^*=x$ for all $x\notin N$.  Then for any $n\in N$ and any $a\notin N$,
$na\notin N$, so $na=(na)^*=a^*n^*=an^*$ giving $n^*=a^{-1}na$.  In particular,
$a^{-1}sa=s^*=s$, so $s$ is central in $G$.  Take $n\in N$ not central.  Then
$n^*=sn$, so $na=san$ (and $an=sna$).  Choose $m\in N$ with $nm\ne mn$.  Since $(nm)a\notin N$,
$[n(ma)]^*=[(nm)a]^*=nma$, while, since $ma\notin N$, $(ma)^*n^*=(ma)sn
=sman=sm(sna)=mna$.  These calculations show $[(n(ma)]^*\ne (ma)^*n^*$, a contradiction.
Thus there exists $a\notin N$ with $a^*\ne a$.

Now $s^*=s$, so  $s\in\C{T}_1$.  This element
commutes with $a-a^*\in\C{T}_4$, so $sa-sa^*=as-a^*s$, $sa+a^*s=as+sa^*$ giving
$sa=as$ (remember that $\ch R\ne2$).
Moreover, for any $x\notin N$, $x=na$ for some $n\in N$,
and $sx=s(na)=nsa=(na)s=xs$, so $s$ is central in $G$.

Let $n$ be any element of $N$.  We proceed exactly as in the $N$ abelian case.

If $n^*=n$, then $n$ commutes with $a-a^*$, so $na=an$.
For example, $aa^*$ is invariant under $*$, so $(aa^*)a=a(aa^*)$ implying $aa^*=a^*a$.

If $n^*\ne n$, the elements $n+n^*$ and $a-a^*$ commute, so
\begin{equation*}
na-na^*+n^*a-n^*a^*=an+an^*-a^*n-a^*n^*,
\end{equation*}
which, rewritten, says,
\begin{equation*}
na+n^*a+a^*n+a^*n^*=an+an^*+na^*+n^*a^*,
\end{equation*}
giving rise apparently to three possibilities,
\begin{equation}\label{eq1}
na=an, \quad na=an^*, \quad na=n^*a^*.
\end{equation}
Just as before, we argue that the third alternative implies
one of the first two.  So assume
$na=n^*a^*$.  Then $n\ne n^*$ means $n^*=sn$ and
$na=n^*a^*=sna^*$ gives $a^*=sa$ too.  Let $b=an$.  If $b=b^*$,
then $an=n^*a^*=(sn)(sa)=na$.
If $b^*\ne b$, then just as with $a$, either $nb=bn$ or $nb=bn^*$ or $nb=n^*b^*$.
If $nb=bn$, then $nan=an^2$, so $na=an$.
If $nb=bn^*$, then $nan=ann^*=an^*n$, so $na=an^*$.
If $nb=n^*b^*$, then $nan=n^*n^*a^*=(sn)n^*(sa)=nn^*a$, so $an=n^*a$ and, applying
the involution $*$, $n^*a^*=a^*n$; that is $(sn)(sa)=(sa)n$, so $na=a(sn)=an^*$.
All this shows that for any $n\in N$ (whether or not $n^*=n$), either $na=an$ or $na=an^*$.

\smallskip
We distinguish two cases.

{\bfseries\boldmath Case I. Suppose $na=an$ for all $n\in N$.}  Then it is easy to see that $a$ is
central, $G=N\langle a\rangle$
is product of the groups $N$ and $\langle a\rangle$ and so $s$ is in fact a unique nonidentity
commutator in $G$.  We claim we may assume $a^*=sa$, without loss of generality.

There exist $n,m\in N$ with $nm\ne mn$.  This implies $n^*\ne n$
and $m^*\ne m$, so $n^*=sn$ and $m^*=sm$.  Let $x=na$, $y=ma$ and note
that $xy=(nm)a^2$ while $yx=(mn)a^2$, so $xy\ne yx$.

We now argue, by way of contradiction, that either $x^*=x$ or $y^*=y$.
Thus we suppose that $x^*\ne x$ and $y^*\ne y$ and conclude that
the $\sharp$-symmetric elements $x-x^*$ and $y-y^*$ must
commute, giving
\begin{equation*}
xy-xy^*-x^*y+x^*y^*=yx-yx^*-y^*x+y^*x^*;
\end{equation*}
that is,
\begin{equation}\label{eq9}
xy+x^*y^*+yx^*+y^*x=yx+y^*x^*+xy^*+x^*y.
\end{equation}
Suppose $x^*y=yx^*$.  Then $x^*yy^*=yx^*y^*$.  Since $yy^*$ is central in $N$,
it is central in $G$, so $yy^*x^*=yx^*y^*$, giving $y^*x^*=x^*y^*$ and so $xy=yx$,
which is not true.  So $x^*y\ne yx^*$, hence $yx^*=sx^*y$ and, similarly, $y^*x=sxy^*$.
Now \eqref{eq9} reads
\begin{equation}\label{eq10}
xy+x^*y^*+sx^*y+sxy^*=yx+y^*x^*+xy^*+x^*y.
\end{equation}
If $x^*=sx$, then $a^*n^*=sna$, so $sa^*n=san$ and $a^*=a$, a contradiction.
Thus $x^*\ne sx$ and, for the same reason, $y^*\ne sy$.  It follows that the three
elements $x^*y^*, sx^*y, sxy^*$ are distinct and so $xy$ appears on the left side
of \eqref{eq10} and hence on the right.

The only possibility is $xy=y^*x^*=(xy)^*$; that is $xy$ is a $*$-symmetric
element of $N$.  This means $xy$ is central in $N$, so $xy$ is central in $N\langle a\rangle=G$
implying $xy=yx$, a contradiction.  It follows, as claimed, that either $x^*=x$ or $y^*=y$.

The first possibility says
$a^*(sn)=na=an$, so $a^*=sa$, and we reach the same conclusion
if $y^*=y$.  Thus $a^*=sa$.

Finally, let $n,m\in N$ with $nm\ne mn$.  Then neither $n$ nor $m$ is central in the SLC
group $N$, so $n^*=sn$ and $m^*=sm$.  Let $x=na$ and note that $x^*=sn^*a=na=x$
and, similarly, $y^*=y$.  Note too that $xy\ne yx$.  Let $\alpha_x,\alpha_y\in R_2$.  Then $\alpha_x x$ and
$\alpha_yy$ are in $\C{T}_3$.  So they must commute, giving $\alpha_x\alpha_y(xy-yx)=0$.
Since $xy\ne yx$, it follows that $\alpha_x\alpha_y=0$.  Thus $R_2^2=\{0\}$ and
the situation is as described in (3).
\smallskip

{\bfseries\boldmath Case II. Suppose that for each $a\notin N$ with $a^*\ne a$,
we have $an\ne na$ for some $n\in N$}.  This implies that if $a\notin N$ and $a=a^*$,
then $na=an$ for all $n\in N$ and, since $G=N\cup Na$, such $a$ is central in $G$.
Remembering that $n^*=n$ and $a^*\ne a$ implies $na=an$ (consider the commutativity
of $n$ and $a-a^*$), it follows that any $g\in G$ with $g^*=g$ is central.  For instance,
$s$ is central in $G$.

Now fix $a\notin N$ with $a^*\ne a$.  There exists $n\in N$ with $an\ne na$.
Remember that this implies $na=an^*$.
Also, $n^{-1}$ and $a$ cannot commute, so
$(n^{-1})^*\ne n^{-1}$ and $n^{-1}a=a(n^{-1})^*
=a(sn^{-1})$, so $s=a^{-1}n^{-1}an=(a,n)$, the commutator.
We claim this implies that $s$ is the only nonidentity commutator in $G$.  This is already
the case in the SLC group $N$ so, noting that $G=N\cup Na$,
there are just two types of commutators to consider.
If $m,n\in N$, then $m(na)=(mn)a$ whereas
\begin{equation*}
(na)m=nam=\begin{cases}
             nma & \text{if $am=ma$} \\
             snma & \text{if $am\ne ma$}
             \end{cases}
\end{equation*}
and, since $nm=mn$ or $nm=smn$, it is clear that $(na)m=nma$ or $(na)m=snma$, so
the commutator $(m,na)=1$ or $s$.  Similarly, it is easy to show that any commutator
$(na,ma)$ with $n,m\in N$ is also $1$ or $s$.

Again suppose that $na\ne an$ for some $n\in N$ and $a\notin N$.
Then $(a,n)=s$.  Also $a^*$ and $n$ cannot commute for
$a^*n=na^*$ implies $a^*na=na^*a=a^*an$, which is false.
So $s=(a^*,n)$ too.
Now $na\ne (na)^*$; otherwise, $na=a^*n^*=a^*sn$, which says
$san=a^*sn$ and $a=a^*$, which is not true.  Thus $na-(na)^*$, which is
$na-a^*n^*=na-sa^*n=na-na^*=n(a-a^*)$ and $a-a^*$ are each in $\C{T}_4$, so they commute.  This gives
\begin{equation*}
ana-ana^*-a^*na+a^*na^*=n(a^2-2aa^*+(a^*)^2),
\end{equation*}
which we rewrite as
\begin{equation*}
sna^2-snaa^*-sna^*a+sn(a^*)^2=na^2-2naa^*+n(a^*)2
\end{equation*}
and then
\begin{equation}\label{eq11}
sna^2+sn(a^*)^2+2naa^*=na^2+n(a^*)^2+2snaa^*.
\end{equation}
If $a^*\ne sa$, then $sna^2\ne naa^*$ and $naa^*$ appears on the left side of this
equation while it cannot appear on the right.  We conclude that $a^*=sa$ in which case
\eqref{eq11} becomes $4sna^2=4na^2$, so $R$ must have characteristic~$4$.

We have shown that if $a\notin N$ with $a^*\ne a$, then such an element is not central
and $a^*=sa$.  It follow that the involution on $G$ is given by \eqref{eq0} and hence $G$
is SLC:  if $gh=hg$ and neither $g$ nor $h$ nor $gh$ is central, then $sgh=(gh)^*=h^*g^*=(sh)(sg)
=hg=gh$, a contradiction.  The situation is as described in (4).

\end{proof}


\providecommand{\bysame}{\leavevmode\hbox to3em{\hrulefill}\thinspace}
\providecommand{\MR}{\relax\ifhmode\unskip\space\fi MR }
\providecommand{\MRhref}[2]{%
  \href{http://www.ams.org/mathscinet-getitem?mr=#1}{#2}
}
\providecommand{\href}[2]{#2}

\end{document}